\newtheorem{theorem}{Theorem}
\newtheorem{lemma}[theorem]{Lemma}
\newtheorem{problem}[theorem]{Problem}
\newtheorem{corollary}[theorem]{Corollary}
\newcommand{\Aa}{\mathcal{A}}
\title{Triangle-free graphs of tree-width $t$ are $\lceil (t+3)/2\rceil$-colorable}
\author{Zden\v{e}k Dvo\v{r}\'{a}k\thanks{Charles University, Prague, Czech Republic.
E-mail: {\tt rakdver@iuuk.mff.cuni.cz}.  Supported by project 17-04611S (Ramsey-like aspects of graph
coloring) of Czech Science Foundation.}\and
Ken-ichi Kawarabayashi\thanks{National Institute of Informatics,
2-1-2 Hitotsubashi, Chiyoda-ku, Tokyo 101-8430, Japan.
E-mail: {\tt k\_keniti@nii.ac.jp}. Supported by JST ERATO Grant Number JPMJER1305, Japan.}}
\date{}
\begin{document}
\maketitle

\begin{abstract}
We prove that every triangle-free graph of tree-width $t$ has chromatic number at most $\lceil (t+3)/2\rceil$,
and demonstrate that this bound is tight.  The argument also establishes a connection between
coloring graphs of tree-width $t$ and on-line coloring of graphs of path-width $t$.
\end{abstract}

While there exist triangle-free graphs of arbitrarily large chromatic number,
forbidding triangles improves bounds on the chromatic number in certain graph classes.
For example, planar graphs are $4$-colorable~\cite{AppHak1,AppHakKoc} and even deciding their $3$-colorability
is an NP-complete problem~\cite{garey1979computers}, while all triangle-free planar graphs are $3$-colorable~\cite{grotzsch1959}.
A graph on $n$ vertices may have chromatic number up to $n$, while Ajtai et al.~\cite{akomsem}
and Kim~\cite{kim1995ramsey} proved a tight upper bound $O\bigl(\sqrt{n/\log n}\bigr)$ on the chromatic number of triangle-free $n$-vertex graphs.
The chromatic number of graphs of maximum degree $\Delta$ may be as large as $\Delta+1$, while
Johansson~\cite{johansson1996asymptotic} proved that triangle-free graphs of maximum degree $\Delta$ have
chromatic number $O(\Delta/\log\Delta)$.

On the other hand, no such improvement is possible for graphs with bounded degeneracy.  A graph $G$ is \emph{$d$-degenerate} if each subgraph
of $G$ contains a vertex of degree at most $d$.  A straightforward greedy algorithm colors every $d$-degenerate
graph using $d+1$ colors; and a construction by Blanche Descartes~\cite{blades} gives for every positive integer $d$ a $d$-degenerate
triangle-free graph that is not $d$-colorable, as observed by Kostochka and Ne\v{s}et\v{r}il~\cite{konedes}.
A construction of triangle-free graphs with high chromatic number by Zykov~\cite{zykov} also has this property,
and the same construction was given (and possibly independently rediscovered) by Alon, Krivelevich, and Sudakov~\cite{triadegen}.

In this note, we consider the chromatic number of graphs of given tree-width $t$.
Note that every graph of tree-width $t$ is $t$-degenerate, but on the other hand,
the clique $K_{t+1}$ has tree-width $t$, establishing $t+1$ as the tight upper bound
on the chromatic number of graphs of tree-width $t$.
We show that the bound can be improved by a constant factor if triangles are forbidden.

\begin{theorem}\label{thm-main}
For any positive integer $t$, every triangle-free graph of tree-width at most $t$ has chromatic number at most $\lceil (t+3)/2\rceil$.
\end{theorem}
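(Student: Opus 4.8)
The plan is to deduce the theorem from an on-line coloring statement for graphs of path-width $t$, which is exactly the connection advertised in the abstract. First I would fix a rooted tree decomposition of $G$ of width $t$ and introduce each vertex at the topmost bag containing it; then the already-introduced neighbours of a new vertex all lie in its current bag and, by triangle-freeness, form an independent set. I would then phrase a \emph{no-recoloring} on-line model: vertices are presented one at a time together with their edges to the current bag, the current bag always has size at most $t+1$, and each vertex must be coloured permanently at its first appearance as a function of the history only. The point is that running any such deterministic algorithm simultaneously along all root-to-leaf paths of the tree decomposition yields a single proper colouring of $G$: two adjacent vertices share a bag and hence lie on a common path, where the algorithm keeps them apart, while every vertex is coloured on the unique path from the root to its introduction bag, so its colour is well defined across branches. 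Thus it suffices to give a no-recoloring on-line algorithm using $k=\lceil (t+3)/2\rceil$ colours.

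For the on-line algorithm I would maintain the invariant that the current bag contains no \emph{rainbow independent set of size $k$}, that is, no $k$ pairwise non-adjacent vertices carrying pairwise distinct colours. This invariant is tuned precisely to keep the algorithm alive: when a new vertex $v$ arrives, its neighbourhood $N$ inside the bag is independent, so selecting one vertex of each colour appearing on $N$ produces a rainbow independent set, which by the invariant has size at most $k-1$; hence $N$ uses at most $k-1$ colours and some colour in $\{1,\dots,k\}$ is free for $v$. Forgetting a vertex only destroys rainbow sets, so the entire burden is to choose the colour $c(v)$ so that no rainbow independent set of size $k$ is created. Since $v$ is adjacent to all of $N$, any newly created rainbow set must avoid $N$; thus I must pick $c(v)$ outside the colour set $C_N$ of $N$ (for properness) that in addition meets every rainbow independent set of size $k-1$ contained in the set $\overline{N}$ of non-neighbours of $v$ in the bag.

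The factor $\tfrac12$, and the main difficulty, both reside in this last colour choice, and here the key is a size trade-off. A rainbow independent set of size $k-1=\lceil (t+1)/2\rceil$ inside $\overline{N}$ can exist only if $|\overline{N}|\ge k-1$, which forces $|N|\le t-(k-1)=\lfloor (t-1)/2\rfloor$; conversely, if $N$ is large then $\overline{N}$ is too small to contain any rainbow set of size $k-1$, so any available colour works. Hence in the only dangerous regime $N$ is small, $C_N$ blocks few colours, and many candidate colours remain with which to pierce the rainbow sets in $\overline{N}$. The hard part will be to prove that in this balanced regime, where both $N$ and $\overline{N}$ have size about $t/2$, a single colour can simultaneously avoid $C_N$ and meet every size-$(k-1)$ rainbow independent set among the non-neighbours. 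I would attack this with a Hall-type matching argument on the bipartite incidence between colours and these maximum rainbow sets, exploiting triangle-freeness of the bag to bound how many colour-disjoint large independent sets can be packed into its at most $t+1$ vertices; this packing limitation is what should force the existence of a common piercing colour lying outside $C_N$.

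Finally, I would certify tightness on the on-line side. I would describe an adversary that spends two units of path-width per additional colour, recursively presenting a vertex whose independent neighbourhood displays all previously forced colours while never keeping more than $t+1$ vertices alive, thereby compelling any no-recoloring algorithm to use $k$ colours. Pushing this construction through the path-to-tree correspondence of the reduction then yields a triangle-free graph of tree-width $t$ with chromatic number exactly $\lceil (t+3)/2\rceil$, matching the upper bound.
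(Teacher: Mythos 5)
Your overall architecture matches the paper's exactly: reduce to an on-line coloring problem by running a deterministic algorithm along the root-to-leaf paths of a rooted tree decomposition (this is the second half of the paper's Lemma~\ref{lemma-online}), and maintain the invariant that no current bag contains a rainbow independent set on all $k=\lceil (t+3)/2\rceil$ colors (the paper's notion of an $F$-valid coloring). Your analysis of the arrival of a new vertex $v$ is also precisely the paper's: the neighborhood $N$ is independent by triangle-freeness, so at most $k-1$ colors appear on it, and the chosen color must avoid $N$'s colors while hitting the color set of every rainbow independent set of size $k-1$ among the non-neighbors $\overline{N}$ (the paper's condition of not being $(F,\varphi)$-forbidden for $F=H[\beta(z')\setminus(N\cup\{v\})]$).

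However, the proposal stops exactly where the real work begins. The entire quantitative content of the theorem is the claim that few colors are forbidden, and you explicitly defer it: ``the hard part will be to prove \dots a Hall-type matching argument \dots this packing limitation is what should force the existence of a common piercing colour.'' That is not a proof, and the step is not routine. The paper supplies it as Lemma~\ref{lemma-numforb}: if the current coloring is $F$-valid, then at most $\max(|V(F)|-k+2,0)$ colors are forbidden, which combined with $2k-2>t$ yields at least one admissible color in every regime, not just the extreme ones you dispose of. The proof of that lemma is a delicate induction: if every forbidden color appears at least twice in $F$, a direct count works; otherwise some forbidden color is carried by a single vertex $v$, and one deletes $v$ together with its entire neighborhood and recurses with one fewer color, using validity to guarantee $v$ has a neighbor so that at least two vertices disappear. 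Nothing in your sketch produces this bound; in particular, a Hall-type or packing argument about color-disjoint independent sets does not obviously handle the troublesome case of a forbidden color represented by a single vertex, which is exactly what the induction is built to treat. A further sign the sketch is off-target: you plan to ``exploit triangle-freeness of the bag'' in this step, but the paper's counting lemma holds for arbitrary graphs and arbitrary colorings---triangle-freeness enters only to make $N$ independent. (Also, your final paragraph on tightness concerns the paper's Theorem~\ref{thm-lower}, not the statement at hand, and the direction of the on-line correspondence it requires is the nontrivial compactness half of Lemma~\ref{lemma-online}; since the theorem asserts only the upper bound, that paragraph is not needed in any case.)
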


We also show that this result is tight.  Actually, we give the following stronger result on graphs with bounded clique number.
Let $g(t,2)=g(0,k)=1$ for all integers $t\ge 0$ and $k\ge 2$.  For $t\ge 1$ and $k\ge 3$, let us inductively define
$g(t,k)=\lceil (t+1)/2\rceil+g(\lfloor (t-1)/2\rfloor,k-1)$.
\begin{theorem}\label{thm-lower}
For all integers $t\ge 0$ and $k\ge 2$, there exists a $K_k$-free graph of tree-width at most $t$ with chromatic number at least $g(t,k)$.
\end{theorem}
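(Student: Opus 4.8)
The plan is to prove Theorem~\ref{thm-lower} by induction on $k$, following the recursion defining $g$. The base cases $k=2$ and $t=0$ are immediate: a $K_2$-free graph is edgeless and a graph of tree-width $0$ is edgeless, so in either case a single vertex realizes $g(t,2)=g(0,k)=1$. For the inductive step fix $t\ge 1$ and $k\ge 3$, write $s=\lceil (t+1)/2\rceil$ and $t'=\lfloor (t-1)/2\rfloor$, and let $H$ be the graph supplied by the induction hypothesis for the pair $(t',k-1)$: it is $K_{k-1}$-free, has tree-width at most $t'$, and satisfies $\chi(H)\ge g(t',k-1)$. The goal is to build from copies of $H$ a graph $G$ that is $K_k$-free, has tree-width at most $t$, and has $\chi(G)\ge s+\chi(H)=g(t,k)$. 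The arithmetic identity driving the whole construction is $t+1=(t'+1)+s$: a bag of the target tree-decomposition of $G$ has room for one full bag of $H$ together with $s$ extra ``skeleton'' vertices.

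First I would record the gadget that raises the clique number by exactly one. If the skeleton vertices form an independent set and distinct copies of $H$ are never joined to one another directly (only through skeleton vertices), then every clique of $G$ lies inside a single copy of $H$ together with at most one skeleton vertex, so $\omega(G)\le 1+\omega(H)\le 1+(k-2)=k-1$ and $G$ is $K_k$-free. The construction of $G$ then arranges many disjoint copies of $H$ along a tree, placing an independent skeleton set of size $s$ in each separator and joining each skeleton vertex to the copies of $H$ appearing below it; reading the copies off the tree yields a tree-decomposition in which every bag consists of one bag of $H$ together with the at most $s$ skeleton vertices active at that node, so the width is at most $(t'+1)+s-1=t$ by the identity above.

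The heart of the matter is the chromatic lower bound, and here the guiding principle is the connection with on-line coloring advertised in the abstract: $G$ should be the ``unfolding'' of an adversary strategy that forces $s+\chi(H)$ colors on a $K_k$-free graph of path-width $t$. Concretely, I would show that in any proper coloring of $G$, reading the colors assigned to the copies of $H$ in the order given by the tree lets an adversary route down a branch along which $s$ distinct skeleton colors appear, none of them among the $\ge \chi(H)$ colors forced inside the copy of $H$ at the bottom of that branch; each skeleton vertex is joined to a copy of $H$ precisely so that its color is forbidden from the palette of that copy, and the branching of the tree guarantees, by a pigeonhole argument over the bounded number of colors an on-line algorithm could have committed to, that the $s$ skeleton colors met along the chosen branch are pairwise distinct. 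Combining them with the $\ge\chi(H)\ge g(t',k-1)$ colors of the terminal copy gives $\chi(G)\ge s+g(t',k-1)=g(t,k)$.

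The main obstacle is making the two requirements hold simultaneously: that the skeleton genuinely forces $s$ \emph{new} and \emph{pairwise distinct} colors, and that the whole object still has tree-width at most $t$. Naive forcing gadgets in the spirit of Zykov's or Mycielski's constructions enlarge the separators and destroy the tree-width bound, so the delicate point is to keep the skeleton both independent (for $K_k$-freeness) and confined to separators of size $s$, while still forcing $s$ colors rather than the $O(\log)$ colors a careless adversary would yield. The factor $1/2$ in the statement is exactly this price, visible in the split $t+1=(t'+1)+s$ with $s\approx t/2$, and getting the on-line adversary to force $s$ colors within this width budget is where I expect the real work of the proof to lie. Finally, specializing to $k=3$ recovers the tightness of Theorem~\ref{thm-main}, since $g(t,3)=\lceil (t+1)/2\rceil+1=\lceil (t+3)/2\rceil$.
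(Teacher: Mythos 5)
Your high-level architecture agrees with the paper's: induction on $k$ following the recursion for $g$, the arithmetic split $t+1=(t'+1)+s$ with $s=\lceil(t+1)/2\rceil$, and an independent ``skeleton'' set of size $s$ completely joined to a $K_{k-1}$-free graph of width $t'$, which yields both $K_k$-freeness and the disjointness of the skeleton colors from the colors of the attached graph. But the core of the proof is missing, and you say so yourself: forcing $s$ \emph{pairwise distinct} colors onto an independent set is exactly ``where the real work of the proof lies,'' and no pigeonhole argument over a tree of copies of $H$ will do it. In an offline graph, nothing forces an independent set to be rainbow unless specific adjacencies to already-distinguished vertices do. The paper supplies these via an explicit gadget (Lemma~\ref{lemma-lbound-constr}): working against an \emph{on-line} coloring algorithm, the adversary repeatedly adds a vertex $v'_j$ adjacent to the previously forced vertices $v_1,\ldots,v_{j-1}$; either the algorithm answers with a fresh color, in which case the alternating set $\{v'_1,\ldots,v'_{j-1},v_j,\ldots,v_{c_0},v'_j\}$ is independent and carries $c_0+1$ distinct colors, or it answers with color $j$ and the process continues, terminating at latest at $j=c_0+1$. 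The bags of this gadget have size up to $2c_0+1$, and this is precisely where the budget $s=\lceil(t+1)/2\rceil$ (the factor $1/2$ in the theorem) comes from---it is a property of the gadget, not of the split you chose, and your sketch contains no candidate mechanism achieving it.

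A second, related gap: your proposal wavers between two settings, speaking of ``any proper coloring of $G$'' while invoking ``the colors an on-line algorithm could have committed to.'' These must be reconciled, and the paper does so with Lemma~\ref{lemma-online}: the lower bound is proved entirely in the on-line setting (it suffices to defeat every on-line algorithm on nice path decompositions of width at most $t$), and the transfer to an actual graph of tree-width at most $t$ is done once and for all by a universal-graph/compactness argument. Moreover, the induction on $k$ is then performed over \emph{algorithms}, not graphs: the inductive hypothesis is applied to a modified algorithm $\Aa'$ that simulates $\Aa$ after the forcing gadget $H_0$, with the forced set $I$ joined to everything that follows; hence the $K_{k-1}$-free graph $H_1$ being attached depends on the coloring behaviour of $\Aa$---it cannot be a single fixed graph $H$ chosen in advance from the induction hypothesis, as in your sketch. (One could alternatively ``unfold'' the adversary into a finite explicit graph, branching over the algorithm's possible answers, but then both the branching structure and the gadget adjacencies must be written down; your copies-of-$H$-along-a-tree picture contains neither.)
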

Note that $g(t,3)=\lceil (t+3)/2\rceil$ for all $t\ge 1$ and that $g(t,k)>\bigl(1-\tfrac{1}{2^{k-2}}\bigr)t$ for all $t\ge 0$ and $k\ge 2$.
Theorem~\ref{thm-lower} motivates the following question, which we were not able to resolve.
\begin{problem}
For integers $k\ge 4$ and $t\ge k-1$, what is the maximum chromatic number of $K_k$-free graphs of tree-width at most $t$?
\end{problem}
Let us remark that in the list coloring setting, the question is much easier to settle---the complete bipartite graph $K_{t,t^t}$ has tree-width $t$,
but there exists an assignment of lists of size $t$ to its vertices from that it cannot be colored, showing that no improvement analogous to Theorem~\ref{thm-main}
is possible.

Another natural question concerns graphs with larger girth.  The construction of Blanche Descartes~\cite{blades} actually produces $d$-degenerate
graphs of girth six that are not $d$-colorable, and Kostochka and Ne{\v{s}}et{\v{r}}il~\cite{konedes} generalize this result to graphs of arbitrarily
large girth.
\begin{problem}
For integers $g\ge 4$ and $t\ge g-1$, what is the maximum chromatic number of graphs of tree-width at most $t$ and girth at least $g$?
\end{problem}

\section{Tree-width, path-width, and on-line coloring}

To prove Theorems~\ref{thm-main} and \ref{thm-lower}, it is convenient to establish a connection
between chromatic number of graphs with given tree-width $t$ and an on-line variant of the chromatic number for
graphs of path-width $t$.  In on-line coloring~\cite{gyarfas1988line}, the graph to be colored is revealed vertex by vertex
and a color has to be assigned to each revealed vertex immediately, with no knowledge regarding the rest of the graph.
We need a variation on this idea, revealing the vertices in the order given by a path decomposition of the graph.

Let us first recall some definitions.
A \emph{tree decomposition} of a graph $G$ is a pair $(T,\beta)$, where $T$ is a tree and $\beta$ is a function
assigning to each vertex of $T$ a set of vertices of $G$, such that for every $uv\in E(G)$ there exists $z\in V(T)$
with $\{u,v\}\subseteq\beta(z)$, and such that for every $v\in V(G)$, the set $\{z:v\in\beta(z)\}$ induces a non-empty
connected subtree in $T$.  The \emph{width} of the decomposition is the maximum of $1+|\beta(z)|$ over all $z\in V(T)$,
and the \emph{tree-width} of $G$ is the minimum possible width of its tree-decomposition.  A \emph{path decomposition}
is a tree decomposition $(T,\beta)$ where $T$ is a path, and \emph{path-width} of $G$ is the minimum possible width of its
path decomposition.

Suppose $(P,\beta)$ is a path decomposition of a graph $G$, where $P=z_0z_1\ldots z_n$.  We say that the path decomposition
is \emph{nice} if $\beta(z_0)=\emptyset$ and $|\beta(z_i)\setminus\beta(z_{i-1})|=1$ for $i=1,\dots, n$.
Observe that each path decomposition can be transformed into a nice one of the same width.
A nice path decomposition gives a natural way to produce the graph $G$ by adding one vertex at a time:
the construction starts with the null graph, and in the $i$-th step for $i=1,\ldots, n$, the unique vertex $v_i\in \beta(z_i)\setminus\beta(z_{i-1})$
is added to $G$ and joined to its neighbors in $\beta(z_i)$.  An \emph{on-line coloring algorithm} $\Aa$
at each step of this process assigns a color to the vertex $v_i$ distinct from the colors of the neighbors of $v_i$ in $\beta(z_i)$.
Note that the assigned color cannot be changed later, and that the algorithm does not in advance know the graph $G$,
only its part revealed till the current step of the process.
Let $\chi_{\Aa}(G,P,\beta)$ denote the number of colors $\Aa$ needs to color $G$ when $G$ is presented to the algorithm $\Aa$ vertex by vertex
according to the nice path decomposition $(P,\beta)$.

Let us remark that we misuse the term ``algorithm'' a bit, as we are not concerned with the question of efficiency or even
computability in any model of computation.  Formally,
an on-line coloring algorithm $\Aa$ is a function from the set of triples $(H,Q,\gamma)$, where
$H$ is a graph with a nice path decomposition $(Q,\gamma)$, to the integers, such that the following holds.
Consider any graph $G$ with a nice path decomposition $(P,\beta)$, where $P=z_0z_1\ldots z_n$ for some $n\ge 1$.
For $i=1,\ldots, n$, let $v_i$ be the unique vertex in $\beta(z_i)\setminus\beta(z_{i-1})$, let
$P_i=P-\{z_{i+1},\ldots,z_n\}$, $G_i=G-\{v_{i+1},\ldots, v_n\}$, and let $\beta_i$ be the restriction of $\beta$ to $V(P_i)$.
If $\varphi:V(G)\to\mathbf{Z}$ is defined by $\varphi(v_i)=\Aa(G_i,P_i,\beta_i)$, then $\varphi$ is a proper coloring of $G$.
And, $\chi_{\Aa}(G,P,\beta)$ is defined as the number of colors used by this coloring $\varphi$.

Nevertheless, it is easier to think about on-line coloring in the adversarial setting: an enemy is producing the graph
$G$ with its nice path decomposition on the fly and the algorithm $\Aa$ is assigning colors to the vertices as they arrive; and the enemy
can construct further parts of the graph depending on the coloring chosen by $\Aa$ so far.
We now present the main result of this section, which we use both to give upper bounds on the chromatic number of graphs with bounded
tree-width and to prove the existence of bounded tree-width graphs with large chromatic number.

\begin{lemma}\label{lemma-online}
Let $t$ and $k$ be positive integers, and let $c$ be the maximum of $\chi(G)$ over all $K_k$-free graphs $G$ of tree-width at most $t$.
There exists an on-line coloring algorithm $\Aa_t$ such that every $K_k$-free graph $H$ that has a nice path decomposition $(P,\beta)$ of width at most $t$
satisfies $\chi_{\Aa_t}(H,P,\beta)\le c$.

Conversely, if an on-line coloring algorithm $\Aa'_t$ satisfies
$\chi_{\Aa'_t}(H,P,\beta)\le c'$ for every $K_k$-free graph $H$ having a path decomposition $(P,\beta)$ of width at most $t$, then
$\chi(G)\le c'$ for every $K_k$-free graph $G$ of tree-width at most $t$.
\end{lemma}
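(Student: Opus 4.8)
The statement has two implications, and the plan is to treat them separately; the forward implication, which turns the offline bound $c$ into an on-line algorithm $\Aa_t$, is where essentially all of the work lies.

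\emph{Forward direction.} The key idea is to fold \emph{all} possible behaviours of the adversary into a single fixed graph, color that graph once, and let $\Aa_t$ merely read off the precomputed color of the vertex currently being presented. Concretely, I would build a rooted tree $\mathcal{T}$ whose root is the empty configuration and whose nodes correspond to the reachable states of the revealing process: from a node whose current bag is $B$, the children range over all legal moves, i.e.\ all ways to forget a subset $F\subseteq B$, introduce one new vertex $v$, and attach $v$ to a subset of $B\setminus F$, subject to the width bound $|(B\setminus F)\cup\{v\}|\le t$ and to the constraint that no $K_k$ is created. Since $|B|\le t$, there are only finitely many moves at each node, so $\mathcal{T}$ is locally finite, hence countable. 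Let $\Gamma$ be the graph whose vertices are the non-root nodes of $\mathcal{T}$ (each carrying the vertex introduced at that step), with two nodes adjacent exactly when the later one was attached to the earlier. Reading the current-bag contents as bag labels turns $\mathcal{T}$ into a tree decomposition of $\Gamma$ of width at most $t$, and since every clique of $\Gamma$ lies in a single bag and every bag occurs along a legal (hence $K_k$-free) play, $\Gamma$ is $K_k$-free.

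Every finite subgraph of $\Gamma$ is a finite $K_k$-free graph of tree-width at most $t$ and is therefore $c$-colorable, so by the De~Bruijn--Erd\H{o}s compactness theorem $\Gamma$ itself admits a proper coloring $\Phi$ using at most $c$ colors. I then define $\Aa_t$ to assign to the vertex introduced at the current step the $\Phi$-color of the corresponding node of $\mathcal{T}$. This is a legitimate on-line algorithm because the sequence of moves revealed so far pins down a unique node of $\mathcal{T}$; and when the adversary presents a $K_k$-free graph $H$ with a nice path decomposition of width at most $t$, the successive bags trace a single root-to-leaf walk in $\mathcal{T}$ along which $H$ appears as an induced subgraph of $\Gamma$. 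Hence $\Aa_t$ reproduces the restriction of $\Phi$ to a copy of $H$, which is proper and uses at most $c$ colors, giving $\chi_{\Aa_t}(H,P,\beta)\le c$.

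\emph{Converse direction.} Here I would run the given algorithm $\Aa'_t$ along a rooted tree decomposition. First normalize $\Aa'_t$ so that it always outputs colors from $\{1,\dots,c'\}$, by relabelling its outputs in order of first appearance within each run; this stays on-line and preserves both properness and the color count. Given a $K_k$-free $G$ of tree-width at most $t$, root a width-$\le t$ tree decomposition and refine it so that each vertex $v$ has a well-defined topmost bag $f(v)$ that introduces it, one vertex per step. Every root-to-leaf branch is then a nice path decomposition of width at most $t$ of an induced (hence $K_k$-free) subgraph of $G$, which I feed to $\Aa'_t$. The crucial observation is that, because the path from the root to $f(v)$ is the same for every branch through $f(v)$ and the algorithm is on-line, $\Aa'_t$ assigns $v$ the same color on all such branches; this common color is well-defined, so setting $\varphi(v)$ to it gives a map $V(G)\to\{1,\dots,c'\}$. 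Any edge of $G$ lies in a common bag and hence on a common branch, where $\varphi$ agrees with a proper coloring produced by $\Aa'_t$, so $\varphi$ is proper and uses at most $c'$ colors, whence $\chi(G)\le c'$.

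\emph{Main obstacle.} I expect the delicate point to be the forward direction, specifically verifying that the universal object $\Gamma$ is simultaneously $K_k$-free and of tree-width at most $t$ while still containing \emph{every} adversary play as an induced subgraph---that is, that restricting the children of $\mathcal{T}$ to $K_k$-preserving moves discards no legal play yet keeps $\Gamma$ inside the class to which the hypothesis $\chi\le c$ applies. The appeal to compactness to pass from ``every finite piece is $c$-colorable'' to ``$\Gamma$ is $c$-colorable'' (with the attendant care when $G$ may be infinite) is the second point that must be handled cleanly; by contrast, everything in the converse is a direct consequence of the prefix-determined nature of an on-line algorithm.
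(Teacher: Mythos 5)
Your proposal is correct and takes essentially the same approach as the paper: your game tree $\mathcal{T}$ and graph $\Gamma$ are exactly the paper's ``universal'' infinite $K_k$-free graph of tree-width at most $t$ together with its tree decomposition, colored once via compactness and read off on-line, and your converse argument (running $\Aa'_t$ along root-to-leaf paths of a normalized rooted tree decomposition, with prefix-determinism giving consistency) is the paper's as well. The only slip is an off-by-one in your move constraint --- width at most $t$ permits bags of size $t+1$, so it should read $|(B\setminus F)\cup\{v\}|\le t+1$, else plays with full-size bags could not be traced in $\mathcal{T}$ --- while your explicit color relabelling in the converse cleanly handles a detail the paper leaves implicit.
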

\begin{proof}
Let us start with the second claim.  Let $G$ be a $K_k$-free graph and let $(T,\beta)$ be
its tree decomposition of width at most $t$.  Without loss of generality, we can assume that the tree $T$ is rooted in a vertex $r$
such that $\beta(r)=\emptyset$ and that each vertex $z\neq r$ of $T$ with parent $z'$ satisfies $|\beta(z)\setminus\beta(z')|=1$.
For each path $P$ in $T$ starting in $r$ and ending in a leaf of $T$, let $G_P$ be the subgraph of $G$ induced by $\bigcup_{z\in V(P)}\beta(z)$
and let $\beta_P$ be the restriction of $\beta$ to $V(P)$.  Then $(P,\beta_P)$ is a nice path decomposition of $G_P$ of
width at most $t$, and thus the algorithm $\Aa'_t$ can be used to color $G_P$ by at most $c'$ colors.  Furthermore, if $P'$ is another
root-leaf path in $T$ and some vertex $v$ belongs both to $G_P$ and $G_{P'}$, then $v\in \beta(z)$ for some vertex $z$ belonging to
the shared initial subpath of $P$ and $P'$, and thus the algorithm $\Aa'_t$ assigns the same color to $v$ in its run on $(G_P,P,\beta_P)$ and on $(G_{P'},P',\beta_{P'})$.
We conclude that the colorings of the graphs $G_P$ for all root-leaf paths $P$ are consistent, and their union
gives a proper coloring of $G$ using at most $c'$ colors.

Conversely, let $c$ be as given in the statement of the lemma.  Consider the ``universal'' infinite $K_k$-free graph $G$
of tree-width at most $t$.  That is, $G$ is an infinite graph with rooted tree decomposition $(T,\beta)$ of width at most $t$
satisfying the following conditions.
The root $r$ of $T$ has $\beta(r)=\emptyset$.  For every vertex $z$ of $T$ and all sets $I\subseteq B\subseteq \beta(z)$
such that $G[I]$ is $K_{k-1}$-free and $|B|\le t$, there exists a child $z'$ of $z$ in $T$
such that $\beta(z')=B\cup \{v\}$ for a vertex $v$ not belonging to $\beta(z)$, with $v$ adjacent in $G[\beta(z')]$
precisely to the vertices in $I$; and $z$ has no other children.  A standard compactness argument shows that
there exists a coloring $\varphi$ of $G$ using $c$ colors.  Now, for any $K_k$-free graph $H$ with a nice path decomposition $(P,\beta)$
of width at most $t$, we can identify $P$ with a path in $T$ and $H$ with an induced subgraph of $G$ in the natural way, and the algorithm
$\Aa_t$ works by assigning colors to vertices of $H$ according to the coloring $\varphi$.  Hence, $\chi_{\Aa_t}(H,P,\beta)\le c$.
\end{proof}

\section{Non-colorability}

In this section, we give the construction proving Theorem~\ref{thm-lower}.

Let $H_0$ and $H$ be graphs with nice path decompositions $(P_0,\beta_0)$ and $(P,\beta)$, respectively.
We say that $(H,P,\beta)$ \emph{extends} $(H_0,P_0,\beta_0)$ if $P_0$ is an initial segment of $P$,
$\beta_0$ is the restriction of $\beta$ to $V(P_0)$, and $H_0$ is an induced subgraph of $H$.
Let $z$ be the last vertex of $P$.  We say that a coloring $\varphi$ of $H$ is \emph{$c$-forced}
if there exists an independent set $I\subseteq \beta(z)$ of size $c$ such that vertices of $I$ receive pairwise
distinct colors according to $\varphi$.

\begin{lemma}\label{lemma-lbound-constr}
Let $k\ge 3$ and $c\ge 1$ be integers, and let $\Aa$ be an on-line coloring algorithm.
Let $H_0$ be a triangle-free graph with a nice path decomposition $(P_0,\beta_0)$ of width at most $2c-2$.
Let $c_0\le c-1$ be a non-negative integer.  If the coloring $\varphi_0$ of $H_0$ produced by $\Aa$ is $c_0$-forced,
then there exists a triangle-free graph $H$ with a nice path decomposition $(P,\beta)$ of width at most $2c-2$
such that $(H,P,\beta)$ extends $(H_0,P_0,\beta_0)$ and the coloring $\varphi$ of $H$ produced by $\Aa$ is $(c_0+1)$-forced.
\end{lemma}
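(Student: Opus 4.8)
The plan is to extend $(H_0,P_0,\beta_0)$ by revealing a bundle of new vertices so that, no matter what colors $\Aa$ assigns, we can force one more distinctly-colored vertex into the final bag. Let $z_0$ be the last vertex of $P_0$ and let $I_0\subseteq\beta_0(z_0)$ be the independent set of size $c_0$ whose vertices get pairwise distinct colors, witnessing that $\varphi_0$ is $c_0$-forced. I would first dispose of the trivial case: if $c_0=c-1$, the hypothesis $c_0\le c-1$ and the conclusion $(c_0+1)$-forced can be arranged by a single padding vertex (or one checks the statement is only invoked for $c_0<c-1$); the substance is the generic step.

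The core idea is to attach, for each candidate color $a$ that $\Aa$ might try to reuse, a gadget that creates a new vertex adjacent to a copy of $I_0$ so that the new vertex cannot take any color in $\varphi_0(I_0)$, yet the enemy retains freedom to drive $\Aa$ into introducing a genuinely new color. Concretely, I would append new vertices $w_1,\ldots,w_m$ one at a time (each step enlarging the path decomposition by one bag), keeping $I_0$ together with the most recently colored new vertices inside the current bag. Because the width bound is $2c-2$ and $|I_0|=c_0\le c-1$, there is room in each bag for $I_0$ plus roughly $c-1$ further vertices, which is exactly the slack needed to simultaneously (i) keep $I_0$ visible so the forced set persists and (ii) host a fresh vertex adjacent to all of $I_0$. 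Making each new $w_j$ adjacent only to $I_0$ (and to nothing among the $w$'s) keeps the graph triangle-free, since $I_0$ is independent and $k\ge 3$ guarantees no forbidden clique appears.

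The adversarial heart of the argument is then a pigeonhole/exhaustion over the colors $\Aa$ is willing to use. After the forced set $I_0$ uses colors $c_1,\ldots,c_{c_0}$, I want to force $\Aa$ to commit, on some new vertex, to a color outside $\{c_1,\ldots,c_{c_0}\}$ while that vertex lies in a bag together with a recolorable witness for $I_0$. The plan is to have the enemy present enough independent new vertices all seeing $I_0$ that $\Aa$, forbidden from using $c_1,\ldots,c_{c_0}$ on any of them, is eventually compelled to assign two of them the same new color $d\notin\{c_1,\ldots,c_{c_0}\}$ only if it wants to economize—but by choosing the reveal order adaptively, the enemy can instead split these vertices so that a single new color $d$ appears on an independent vertex that can be slid into the final bag alongside $I_0$, yielding an independent set $I_0\cup\{w\}$ of size $c_0+1$ with distinct colors. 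The delicate bookkeeping is ensuring this new vertex $w$ and all of $I_0$ coexist in one bag of the nice path decomposition without exceeding width $2c-2$, which forces me to recycle bag slots carefully as vertices that are no longer needed for forcing are dropped.

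I expect the main obstacle to be precisely this adaptive adversary construction: guaranteeing that whatever $\Aa$ does, some new vertex receiving a fresh color can be retained in the terminal bag simultaneously with the old forced set, all while respecting both the triangle-free constraint and the tight width budget $2c-2$. The width accounting is the pressure point—$|I_0|+1$ plus any transient vertices must stay within $2c-1$ vertices per bag—so the construction must be engineered so that at most $c-1$ auxiliary vertices are alive at once. I would handle this by revealing the gadget vertices in a disciplined order, immediately forgetting each $w_j$ (removing it from subsequent bags) unless it carries the new color we wish to preserve, and only at the end sliding the chosen $w$ into a bag with $I_0$ to certify the $(c_0+1)$-forced coloring $\varphi$.
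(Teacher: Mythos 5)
Your construction has a genuine gap at its core. You make every new vertex $w_j$ adjacent to \emph{all} of $I_0$, so that $\Aa$ cannot reuse the colors appearing on $I_0$; but then $I_0\cup\{w_j\}$ is never an independent set, no matter how the bags are arranged---being ``slid into the final bag'' addresses only the containment requirement in the definition of $c$-forced, not the independence requirement. Conversely, if some new vertex is \emph{not} adjacent to all of $I_0$, nothing prevents $\Aa$ from giving it the color of one of its non-neighbors in $I_0$. Your pigeonhole over many such vertices does not escape this dilemma: since your $w_j$'s are pairwise non-adjacent, an algorithm that answers every vertex seeing all of $I_0$ with one fixed fresh color $d$, and every other new vertex with the color of a non-neighbor in $I_0$, defeats the construction---any independent set in the final bag is then either a subset of $I_0$ (at most $c_0$ colors) or a set of $w_j$'s (one color $d$), plus mixtures that necessarily omit the colors of the dominated part of $I_0$. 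The phrases ``recolorable witness'' and ``the enemy can instead split these vertices'' are exactly where a new idea is required, and none is supplied. (Your side remark on $c_0=c-1$ is also wrong on both counts: an isolated padding vertex may simply receive a repeated color, and the lemma genuinely is invoked with $c_0=c-1$ in the final iteration leading to Corollary~\ref{cor-lbound}.)

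The missing idea, used in the paper, is \emph{duplication} of the forced set one vertex at a time. Order $I_0=\{v_1,\ldots,v_{c_0}\}$ with $\varphi_0(v_i)=i$, and introduce new vertices $v'_1,v'_2,\ldots$, where $v'_j$ is adjacent precisely to the originals $v_1,\ldots,v_{j-1}$ (this keeps the graph triangle-free, and bags of the form $\{v_1,\ldots,v_{c_0},v'_1,\ldots,v'_j\}$ have size at most $2c_0+1\le 2c-1$, respecting the width bound). Since $v'_j$ sees colors $1,\ldots,j-1$, either it receives a color outside $\{1,\ldots,c_0\}$, or, after renaming, $\varphi(v'_j)=j$. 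In the first case one stops: the set $\{v'_1,\ldots,v'_{j-1},v_j,\ldots,v_{c_0},v'_j\}$ is independent---the copies are pairwise non-adjacent, and $v'_j$ is adjacent only to the originals $v_1,\ldots,v_{j-1}$, which have been swapped out for their same-colored copies---and it carries $c_0+1$ distinct colors. In the second case one continues; at $j=c_0+1$ the first case is unavoidable, since $v'_{c_0+1}$ sees all of the colors $1,\ldots,c_0$ on its neighbors. It is precisely this swap of each $v_i$ for a non-adjacent copy $v'_i$ carrying the same color that reconciles ``forced to use a fresh color'' with ``independent from the witness set''---the tension your write-up correctly identifies but does not resolve.
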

\begin{proof}
Let $z_0$ be the last vertex of $P_0$ and let $I=\{v_1,\ldots, v_{c_0}\}$ be an independent set contained in $\beta_0(z_0)$
such that $\varphi_0$ assigns pairwise distinct colors to vertices of $I$.
Without loss of generality $\varphi_0(v_1)=1$, \ldots, $\varphi_0(v_{c_0})=c_0$.  As $(H,P,\beta)$ will be chosen
to extend $(H_0,P_0,\beta_0)$, the coloring $\varphi$ produced by the algorithm $\Aa$ will match $\varphi_0$ on $V(H_0)$.

We will now append further vertices $x_1$, $x_2$, \ldots at the end of $P_0$ to obtain the path $P$ as follows.
Let $\beta(x_1)=\{v_1, \ldots, v_{c_0},v'_1\}$ for a new vertex $v'_1$
with no neighbors, and use the algorithm $\Aa$ to extend $\varphi$ to this vertex.  If $\varphi(v'_1)$ is distinct from $1$, \ldots, $c_0$, then
we stop the construction.  Otherwise, we can by symmetry assume that $\varphi(v'_1)=1$.  We then let $\beta(x_2)=\{v_1,\ldots, v_{c_0},v'_1,v'_2\}$ for
a new vertex $v'_2$ adjacent to $v_1$.  If $\varphi(v'_2)$ is distinct from $1$, \ldots, $c_0$, then we stop the construction (the independent
set receiving $c_0+1$ distinct colors is $\{v'_1,v_2,\ldots,v_{c_0},v'_2\}$).  Otherwise, since $\varphi(v'_2)\neq\varphi(v_1)$, we can assume that $\varphi(v'_2)=2$.
Similarly, we proceed for $j=3,\ldots, c_0+1$: we set $\beta(x_j)=\{v_1,\ldots,v_{c_0},v'_1,\ldots, v'_j\}$, with $v'_j$ adjacent to $v_1$, \ldots, $v_{j-1}$,
and depending on the decision of $\Aa$ regarding the color of $v'_j$, we either stop with the independent
set $\{v'_1,v'_2,\ldots, v'_{j-1},v_j,v_{j-1},\ldots, v_{c_0},v'_j\}$ using $c_0+1$ distinct colors, or we can assume
that $\varphi(v'_j)=j$.  The former happens necessarily at latest when $j=c_0+1$.

All the bags created throughout this process have size at most $2c_0+1\le 2c-1$, and thus the width of the resulting path decomposition
is at most $2c-2$.
\end{proof}

Iterating Lemma~\ref{lemma-lbound-constr}, we obtain the following.

\begin{corollary}\label{cor-lbound}
Let $t\ge 0$ be an integer. For any on-line coloring algorithm $\Aa$,
there exists a triangle-free graph $H$ with a nice path decomposition $(P,\beta)$ of width at most $t$
such that the coloring $\varphi$ of $H$ produced by $\Aa$ is $\lceil\tfrac{t+1}{2}\rceil$-forced.
\end{corollary}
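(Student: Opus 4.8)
The plan is to iterate Lemma~\ref{lemma-lbound-constr} starting from the trivial configuration and accumulate forced colors one at a time, stopping when the bound $\lceil (t+1)/2\rceil$ is reached. First I would fix an on-line coloring algorithm $\Aa$ and set $c=\lceil (t+1)/2\rceil$, so that $2c-2 = 2\lceil (t+1)/2\rceil - 2 \le t$ (this inequality is what matches the width bound in the Lemma to the target width $t$). The base case is to take $H_0$ to be the empty graph on the one-vertex path decomposition $P_0 = z_0$ with $\beta_0(z_0)=\emptyset$; the coloring produced by $\Aa$ on this is vacuously $0$-forced, since the empty independent set trivially receives $0$ distinct colors. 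This handles $c_0 = 0$.

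Next I would repeatedly apply Lemma~\ref{lemma-lbound-constr}. Given a triangle-free graph $H_i$ with a nice path decomposition of width at most $2c-2$ whose $\Aa$-produced coloring is $c_i$-forced, as long as $c_i \le c-1$ the Lemma produces an extension $(H_{i+1},P_{i+1},\beta_{i+1})$ of width at most $2c-2$ that is triangle-free and whose $\Aa$-produced coloring is $(c_i+1)$-forced. The key point is that because each configuration \emph{extends} the previous one and $\Aa$ is an on-line algorithm, the color it assigns to any already-revealed vertex never changes; hence the forced independent set constructed at stage $i+1$ genuinely certifies $(c_i+1)$ distinct colors in the final coloring. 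Starting from $c_0 = 0$ and incrementing, after $\lceil (t+1)/2\rceil = c$ applications we reach a configuration $(H,P,\beta)$ whose coloring is $c$-forced, i.e.\ $\lceil (t+1)/2\rceil$-forced, with width at most $t$, which is exactly the claimed conclusion.

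The main thing to verify carefully is the consistency of the iteration: Lemma~\ref{lemma-lbound-constr} is stated for a \emph{fixed} algorithm $\Aa$ and a starting configuration that is already $c_0$-forced, and I must check that the extension relation is transitive so that the final $(H,P,\beta)$ extends every intermediate configuration, and in particular that $\Aa$'s behavior on the initial segments agrees throughout. This follows from the definition of an on-line coloring algorithm as a function of the revealed initial segment $(G_i,P_i,\beta_i)$: since each $H_{i+1}$ restricts to $H_i$ along an initial segment of the path, $\Aa$ reproduces exactly the same colors on $H_i$, so the forced sets accumulate coherently. The hypothesis $c_0 \le c-1$ in the Lemma is precisely what permits each of the $c$ steps, since we only ever invoke it with $c_i \in \{0,1,\dots,c-1\}$. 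No step is genuinely difficult once the width arithmetic $2c-2 \le t$ and the transitivity of extension are pinned down; the whole proof is a clean induction whose inductive step is supplied verbatim by the preceding Lemma.
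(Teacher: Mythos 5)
Your proposal is correct and is exactly the argument the paper intends: the paper states the corollary with no proof beyond ``Iterating Lemma~\ref{lemma-lbound-constr}.'' You supply the details the paper leaves implicit---the width arithmetic $2\lceil (t+1)/2\rceil-2\le t$, the vacuously $0$-forced empty starting configuration, and the consistency of $\Aa$ across extensions---all of which check out.
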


We are now ready to establish Theorem~\ref{thm-lower}.

\begin{proof}[Proof of Theorem~\ref{thm-lower}]
By Lemma~\ref{lemma-online}, it suffices to show that for every on-line coloring algorithm $\Aa$,
there exists a $K_k$-free graph $H$ with a nice path decomposition $(P,\beta)$ of width at most $t$ such that $\chi_{\Aa}(H,P,\beta)\ge g(t,k)$.
We prove the claim by induction on $k$.  When $k=2$ or $t=0$, we have $g(t,k)=1$ and the claim obviously holds, with $H$ being a single-vertex graph.
Hence, we can assume that $k\ge 3$ and $t\ge 1$.
Let $c_1=\lceil\tfrac{t+1}{2}\rceil$ and $c_2=t-c_1=\lfloor\tfrac{t-1}{2}\rfloor$.

Let $H_0$ be a triangle-free graph with a nice path decomposition $(P_0,\beta_0)$ of width at most $t$
such that the coloring $\varphi_0$ of $H_0$ produced by $\Aa$ is $c_1$-forced, obtained using Corollary~\ref{cor-lbound}.  Let $z_0$ be the last vertex of $P_0$
and let $I$ be an independent set in $H_0[\beta_0(z_0)]$ of size $c_1$ whose vertices are colored by pairwise distinct colors in $\varphi_0$.

Let $\Aa'$ be an on-line coloring algorithm defined as follows: given any graph $H_1$ with a nice path decomposition $(P_1,\beta_1)$,
let $H'_1$ be the graph obtained from a disjoint union of $H_0$ and $H_1$ by adding all edges between $I$ and $V(H_1)$.
Let $P'_1$ be the concatenation of $P_0$ and $P_1$, with $\beta'_1(z)=\beta_0(z)$ for $z\in V(P_0)$ and $\beta'_1(z)=\beta_1(z)\cup I$
for $z\in V(P_1)$.  Then $(P'_1,\beta_1)$ is a nice path decomposition of $H'_1$.  The algorithm $\Aa'$ obtains a coloring of $H_1$ as
the restriction of the coloring of $H'_1$ given by the algorithm $\Aa$ to $V(H_1)$.

By the induction hypothesis, there exists a $K_{k-1}$-free graph $H_1$ with a nice path decomposition $(P_1,\beta_1)$ of width at most $t-c_1=c_2$
such that $\chi_{\Aa'}(H_1,P_1,\beta_1)\ge g(c_2,k-1)$.  Let $H$ and its path decomposition $(P,\beta)$ be chosen as the corresponding
graph $H'_1$ with path decomposition defined as in the previous paragraph.
Since $H_1$ is $K_{k-1}$-free, $H_0$ is triangle-free, and $I$ is an independent set, we conclude that $H$ is $K_k$-free.
Let $\varphi$ be the coloring of $H$ obtained by $\Aa$.  The restriction of $\varphi$ to $V(H_0)$ matches $\varphi_0$,
and in particular $c_1$ distinct colors are used on $I$.  The restriction of $\varphi$ to $V(H_1)$ by definition
matches the coloring of $H_1$ obtained by the algorithm $\Aa'$, and thus it uses at least $g(c_2,k-1)$ distinct colors.
Furthermore, since each vertex of $I$ is adjacent to all vertices of $V(H_1)$ in $H$, the sets of colors used on $I$ and on $V(H_1)$ are disjoint.
Therefore, $\chi_{\Aa}(H,P,\beta)\ge c_1+g(c_2,k-1)=g(t,k)$.
\end{proof}

\section{Colorability}

Let $c'$ be a positive integer, let $F$ be a graph, and let $\varphi$ be a $c'$-coloring
of $F$.  We say that $\varphi$ is \emph{$F$-valid} if $F$ does not contain any
independent set on which $\varphi$ uses all $c'$ distinct colors.
We say that a color $a$ is \emph{$(F,\varphi)$-forbidden} if there exists an independent set
$A_a\subseteq V(F)$ in $F$ such that $\varphi$ uses all colors except for $a$ on $A_a$.  We need the following auxiliary claim.

\begin{lemma}\label{lemma-numforb}
Let $c'$ be a positive integer and let $\varphi$ be a $c'$-coloring of a graph $F$.
If $\varphi$ is $F$-valid, then at most $\max(|V(F)|-c'+2,0)$ colors are $(F,\varphi)$-forbidden.
\end{lemma}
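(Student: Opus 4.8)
The plan is to reformulate ``forbidden color'' in terms of maximal rainbow structures and then reduce the whole statement to a single injection. First I would dispose of the degenerate case: if $|V(F)|\le c'-1$, then any independent set meets at most $|V(F)|\le c'-1$ color classes, while a $(F,\varphi)$-forbidden color requires an independent set meeting exactly $c'-1$ classes; when $|V(F)|\le c'-2$ no such set exists, so $0$ colors are forbidden and the bound $\max(|V(F)|-c'+2,0)=0$ holds. Hence I may assume there is at least one forbidden color. Call an independent set \emph{rainbow} if $\varphi$ is injective on it; since $\varphi$ is $F$-valid, every rainbow set meets at most $c'-1$ colors, and the presence of a forbidden color shows this maximum is exactly $c'-1$. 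I would then record the clean characterization: a color $a$ is $(F,\varphi)$-forbidden if and only if $a$ is the unique missing color of some maximum rainbow independent set of size $c'-1$ (prune any witness $A_a$ to one vertex per color). Fix one such maximum rainbow set $S$, of size $c'-1$, missing the forbidden color $a_0$.

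The key structural fact I would extract from validity is that $S$ cannot be extended: for any vertex $v\notin S$ whose addition would complete all $c'$ colors, $S\cup\{v\}$ is not independent, so $v$ has a neighbor in $S$. In particular every vertex of color $a_0$ lies outside $S$ and has a neighbor in $S$. Now I would reduce the target inequality to an injection. Writing $m$ for the number of forbidden colors, $a_0$ is one of them, and $|V(F)\setminus S|=|V(F)|-c'+1$. Thus it suffices to injectively map the remaining $m-1$ forbidden colors $b\neq a_0$ into $V(F)\setminus S$, which yields $m-1\le |V(F)|-c'+1$, i.e.\ $m\le |V(F)|-c'+2$, as required.

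To build the injection I would invoke Hall's theorem on the bipartite graph with one side the forbidden colors $b\neq a_0$ and the other side $V(F)\setminus S$, joining $b$ to a vertex $w\notin S$ whenever $w$ belongs to some independent set that uses all colors except $b$. Each such $b$ has at least one neighbor, since any witness for $b$ contains a vertex of color $a_0$, and that vertex necessarily lies outside $S$. Hall's condition then asks that any $k$ of the forbidden colors admit, among their witnesses, at least $k$ distinct vertices outside $S$; an SDR for this bipartite graph is exactly the desired injection.

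The main obstacle is verifying this Hall condition, and I expect all the real work to sit here. The plan is to argue by contradiction: take a minimal family $B''$ of forbidden colors with $|N(B'')|<|B''|$, and set $X=N(B'')\subseteq V(F)\setminus S$. Since $a_0\notin B''$, the vertices $\{s_b:b\in B''\}$ of the corresponding colors lie in $S$, while every witness for a color in $B''$ must push its color-$a_0$ vertex (and indeed its entire part outside $S$) into the small set $X$. I would then perform an exchange on $S$: delete the vertices $\{s_b:b\in B''\}$ and splice in witness vertices from $X$ together with color-$a_0$ vertices, using the non-extendability of $S$ to control independence, aiming to produce either an independent set realizing all $c'$ colors (contradicting $F$-validity) or a rainbow independent set larger than $S$ (contradicting maximality). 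The delicate point is the bookkeeping of which colors survive each swap while independence is maintained, and it is here that one must use both the maximality of $S$ and the fact that $|X|<|B''|$ to force the forbidden configuration; I anticipate that a careful induction on $|B''|$ (or an augmenting-path formulation of the matching) will be needed to make this exchange rigorous.
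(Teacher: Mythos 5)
Your reduction is set up correctly as far as it goes: the characterization of forbidden colors via rainbow independent sets of size $c'-1$, the choice of a non-extendable rainbow set $S$ missing a forbidden color $a_0$, and the observation that every witness for a forbidden color $b\neq a_0$ contains a color-$a_0$ vertex lying outside $S$ are all sound, and a system of distinct representatives for the bipartite graph you describe would indeed give $m-1\le |V(F)|-c'+1$ (writing $m$, as you do, for the number of forbidden colors). But the argument stops exactly where you admit the real work sits: Hall's condition is never verified, and the exchange you outline is a plan, not a proof. Moreover, the difficulty is not mere bookkeeping. Suppose $B$ is a violating family of forbidden colors and $X=N(B)$. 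By your definition of the bipartite graph, every witness for every $b\in B$ lies entirely inside $S\cup X$, so all colors of $B\cup\{a_0\}$ are forbidden for the induced subgraph $F[S\cup X]$ with the restricted coloring, which is still valid. When $X\subsetneq V(F)\setminus S$ this smaller graph violates the lemma's bound, since $|V(F[S\cup X])|-c'+2=|X|+1<|B|+1$, so an induction on $|V(F)|$ would close that case. But when $X=V(F)\setminus S$ the ``smaller'' graph is $F$ itself, and verifying Hall's condition for that set is literally the statement being proved: a violation there says $m-1>|V(F)|-c'+1$, i.e.\ the negation of the lemma for $F$. Your sketched splicing of witness vertices into $S$ gives no handle on this case; non-extendability of $S$ only says each vertex outside $S$ has some neighbor in $S$, which by itself does not force an independent set carrying all $c'$ colors.

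The paper's proof gets past precisely this hard configuration by a mechanism your outline has no counterpart for: counting color multiplicities. If at least two colors are forbidden, then every color appears on $F$; if in addition every forbidden color appeared on at least two vertices, one would get $m\le |V(F)|-c'$ outright. Otherwise some forbidden color appears on exactly one vertex $v$, and deleting $v$ together with all its neighbors yields a graph $F'$ with a valid $(c'-1)$-coloring in which every other forbidden color remains forbidden (every witness for such a color must contain $v$ and avoid its neighbors), while $|V(F')|\le|V(F)|-2$ lets the induction close. Some step of this kind---locating a forbidden color used exactly once and contracting the problem to fewer colors---appears necessary to break the circularity in your case $N(B)=V(F)\setminus S$; until that case is handled, the proposal has a genuine gap.
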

\begin{proof}
We prove the claim by induction on $|V(F)|$, and thus we assume that Lemma~\ref{lemma-numforb} holds for all
graphs with fewer than $|V(F)|$ vertices.  For each $(F,\varphi)$-forbidden color $a$, let $A_a$ be an independent set
such that $\varphi$ uses all colors except for $a$ on $A_a$.

If $|V(F)|\le c'-2$, then $F$ contains no independent set of size $c'-1$, and thus no color is $(F,\varphi)$-forbidden.
Hence, suppose that $|V(F)|\ge c'-1$, and thus $|V(F)|-c'+2\ge 1$.  If at most one color is $(F,\varphi)$-forbidden, then
the lemma holds.  Hence, we can by symmetry assume that colors $1$ and $2$ are forbidden.  Note that all $c'$ colors
appear at least once on $A_1\cup A_2$.  If each $(F,\varphi)$-forbidden color is used on at least two vertices of $F$,
then the number of $(F,\varphi)$-forbidden colors is at most $|V(F)|-c'$, and the lemma holds.

Hence, we can assume that the color $c'$ is $(F,\varphi)$-forbidden and used on exactly one vertex $v$ of $F$.
Let $F'$ be the graph obtained from $F$ by removing $v$ and all the neighbors of $v$, and let $\varphi'$ be the restriction
of $\varphi$ to $F'$.  Note that $\varphi'$ is a $(c'-1)$-coloring of $F'$.  We claim that $\varphi'$ is $F'$-valid.
Indeed, if all colors $1$, \ldots, $c'-1$ were used on an independent set $A'\subseteq V(F')$, then all colors $1$, \ldots, $c$
would be used on the independent set $A'\cup\{v\}$ in $F$, contradicting the assumption that $\varphi$ is $F$-valid.
If a color $a\neq c'$ is $(F,\varphi)$-forbidden, then note that $A_a$ contains $v$ since $v$ is the only vertex of $F$ of color $c'$,
and does not contain any of the neighbors of $v$ since $A_a$ is an independent set.  Hence, $A_a\setminus\{v\}$ is an independent
set in $F'$ on that all colors except for $a$ appear, and thus $a$ is $(F',\varphi')$-forbidden.  Denoting by $f'$ the number of
$(F',\varphi')$-forbidden colors, we conclude that at most $f'+1$ colors are $(F,\varphi)$-forbidden.

Since $\varphi$ is $F$-valid, the set $A_{c'}\cup\{v\}$ is not independent, and thus $v$ has degree at least one.
Consequently, $|V(F')|\le |V(F)|-2$.  By the induction hypothesis, we conclude that the number of $(F,\varphi)$-forbidden colors
is at most
$$\max(|V(F')|-(c'-1)+2,0)+1\le\max(|V(F)|-c'+2,1)=|V(F)|-c'+2,$$
as required.
\end{proof}

We are now ready to bound the chromatic number of triangle-free graphs of tree-width at most $t$.

\begin{proof}[Proof of Theorem~\ref{thm-main}]
We need to show that every triangle-free graph $G$ of tree-width at most $t$ can be colored using $c'=\lceil (t+3)/2\rceil$
colors.  Note that $2c'-2>t$.  By Lemma~\ref{lemma-online}, it suffices to design an on-line coloring algorithm $\Aa'$
that colors every triangle-free graph $H$ with a nice path decomposition $(P,\beta)$ of width at most $t$ using at most $c'$ colors.

The algorithm $\Aa'$ maintains the invariant that the restriction of the $c'$-coloring $\varphi$ produced by this algorithm to $\beta(z)$
is $H[\beta(z)]$-valid for every $z\in V(P)$.  Consider any vertex $z'$ with predecessor $z''$ in $P$, let
$v$ be the unique vertex in $\beta(z')\setminus\beta(z'')$, and let $N$ be the set of neighbors of $v$ in $\beta(z')$.
Since $H$ is triangle-free, $N$ is an independent set, and thus at most $\min(|N|,c'-1)$ colors appear on $N$ by the invariant.
To get a proper coloring maintaining the invariant, it suffices to assign $v$ an arbitrary color that does not appear on $N$
and that is not $(H[\beta(z')\setminus(N\cup\{v\})],\varphi)$-forbidden.
Since $|\beta(z')|\le t+1$, Lemma~\ref{lemma-numforb} implies that the number of
such colors is at least
$$c'-\min(|N|,c'-1)-\max(t-|N|-c'+2,0).$$
If $|N|\ge c'$, this is at least $c'-(c'-1)-\max(t-2c'+2,0)=1$, using the fact that $2c'-2>t$.
If $|N|\le c'-1$, this is at least $c'-|N|-\max(t-|N|-c'+2,0)=\min(2c'-t-2,c'-|N|)\ge 1$.
In either case it is possible to extend the coloring.
\end{proof}

\section*{Acknowledgments}

We would like to thank Endre Cs\'oka, Tom\'a\v{s} Kaiser, and Edita Rollov\'{a} for fruitful discussions which
led to a simplification of the proof of Lemma~\ref{lemma-numforb}.

\bibliographystyle{siam}
\bibliography{trfreetw}

\begin{thebibliography}{10}

\bibitem{akomsem}
{\sc M.~Ajtai, J.~Koml{\'o}s, and E.~Szemer{\'e}di}, {\em A note on {R}amsey
  numbers}, Journal of Combinatorial Theory, Series A, 29 (1980), pp.~354--360.

\bibitem{triadegen}
{\sc N.~Alon, M.~Krivelevich, and B.~Sudakov}, {\em Coloring graphs with sparse
  neighborhoods}, J. Comb. Theory, Ser. {B}, 77 (1999), pp.~73--82.

\bibitem{AppHak1}
{\sc K.~Appel and W.~Haken}, {\em Every planar map is four colorable, {P}art
  {I}: {D}ischarging}, Illinois J. of Math., 21 (1977), pp.~429--490.

\bibitem{AppHakKoc}
{\sc K.~Appel, W.~Haken, and J.~Koch}, {\em Every planar map is four colorable,
  {P}art {II}: {R}educibility}, Illinois J. of Math., 21 (1977), pp.~491--567.

\bibitem{blades}
{\sc B.~Descartes}, {\em Solution to advanced problem no. 4526}, Amer. Math.
  Monthly, 61 (1954), p.~532.

\bibitem{garey1979computers}
{\sc M.~Garey and D.~Johnson}, {\em {Computers and Intractability: A Guide to
  the Theory of NP-completeness}}, WH Freeman \& Co. New York, NY, USA, 1979.

\bibitem{grotzsch1959}
{\sc H.~Gr{\"o}tzsch}, {\em Ein {D}reifarbensatz f\"{u}r {D}reikreisfreie
  {N}etze auf der {K}ugel}, Math.-Natur. Reihe, 8 (1959), pp.~109--120.

\bibitem{gyarfas1988line}
{\sc A.~Gy{\'a}rf{\'a}s and J.~Lehel}, {\em On-line and first fit colorings of
  graphs}, Journal of Graph theory, 12 (1988), pp.~217--227.

\bibitem{johansson1996asymptotic}
{\sc A.~Johansson}, {\em Asymptotic choice number for triangle free graphs},
  DIMACS Technical Report, 91-4, 1196 (1996).

\bibitem{kim1995ramsey}
{\sc J.~H. Kim}, {\em The {R}amsey number {$R(3, t)$} has order of magnitude
  $t^2/\log t$}, Random Structures \& Algorithms, 7 (1995), pp.~173--207.

\bibitem{konedes}
{\sc A.~Kostochka and J.~Ne{\v{s}}et{\v{r}}il}, {\em Properties of {D}escartes'
  construction of triangle-free graphs with high chromatic number},
  Combinatorics, Probability and Computing, 8 (1999), pp.~467--472.

\bibitem{zykov}
{\sc A.~Zykov}, {\em On some properties of linear complexes}, Mat. Sb. (NS), 24
  (1949), pp.~163--188.

\end{thebibliography}

\end{document}